\documentclass[english, 11pt]{article}

\usepackage[a4paper,top=3cm, bottom=3cm, left=2.75cm, right=2.75cm]{geometry}%margin=3cm
\usepackage{fourier}
\usepackage[T1]{fontenc}
\usepackage[utf8]{inputenc}
\usepackage{babel}
\usepackage{microtype}
\usepackage{amsmath,amsthm,amssymb}
\usepackage{tikz}
\usepackage[round]{natbib}

\usepackage{authblk}\usepackage[colorlinks=true,citecolor=blue,linkcolor=blue,urlcolor=red]{hyperref}

\newcommand{\conv}{\mathop{\mathrm{conv}}}
\newcommand{\bem}[1]{\emph{\textbf{#1}\/}}

\theoremstyle{plain} % just in case the style had changed
\newtheorem{theorem}{Theorem}
\newcommand{\thistheoremname}{}
\newtheorem*{genericthm*}{\thistheoremname}
\newenvironment{namedthm*}[1]
  {\renewcommand{\thistheoremname}{#1}%
   \begin{genericthm*}}
  {\end{genericthm*}}

\begin{document}
\title{\large\textbf{An elementary direct proof that the Knaster-Kuratowski-Mazurkiewicz lemma implies Sperner's lemma}}
\author{\textsc\normalsize Mark Voorneveld}
\affil{\small Department of Economics, Stockholm School of Economics, Box 6501, 113 83 Stockholm, Sweden, \url{mark.voorneveld@hhs.se}}

\date{\small \today}
\maketitle

\begin{abstract}
Three central results in economic theory --- Brouwer's fixed-point theorem, Sperner's lemma, and the Knaster-Kuratowski-Mazurkiewicz (KKM) lemma --- are known to be equivalent. In almost all cases, elementary direct proofs of one of these results using any of the others are easily found in the literature. This seems not to be the case for the claim that the KKM lemma implies Sperner's lemma. The goal of this note is to provide such a proof.

\medskip
\noindent \textbf{Keywords.} Knaster-Kuratowski-Mazurkiewicz, Sperner, Brouwer.

\noindent \textbf{JEL classification.} C62, C69.
\end{abstract}

\bigskip
\begin{center}
To appear in \textit{Economics Letters}, \url{https://doi.org/10.1016/j.econlet.2017.06.013}.
\end{center}

\newpage
\section{Introduction}

Three central results for economic theory are known to be equivalent: Brouwer's \citeyearpar{Brouwer1911} fixed-point theorem, Sperner's \citeyearpar{Sperner1928} lemma, and the Knaster-Kuratowski-Mazurkiewicz (KKM, \citeyear{KKM1929}) lemma. Shortly after Sperner's publication, \citet[\S 3--4]{KKM1929} showed that Sperner's lemma implies the KKM lemma and that the latter implies Brouwer's fixed-point theorem. Much later, \citet[Thm.~1]{Yoseloff1974} proved that Brouwer's fixed-point theorem implies Sperner's lemma, thereby closing the cycle of implications that makes these results equivalent.

So we have an indirect proof that KKM implies Sperner: KKM implies Brouwer, Brouwer implies Sperner. But my search for an elementary, direct proof came up empty. After recalling relevant definitions, I provide such a proof below. For short direct proofs that Brouwer implies KKM and that Sperner implies Brouwer, see, e.g., \citet[p. 44 and 28]{Border1985}.

\section{Definitions and notation}

Throughout the note, sets lie in $\mathbb{R}^n$. Denote its standard basis vectors by $e_1, \ldots, e_n$. An $m$-\emph{simplex\/} in $\mathbb{R}^n$ is the convex hull
\[
\textstyle S = \conv \{a_1, \ldots, a_{m+1}\} = \left\{\sum_{i=1}^{m+1} \lambda_i a_i \in \mathbb{R}^n: \lambda_1, \ldots, \lambda_{m+1} \geq 0, \sum_i \lambda_i = 1\right\}
\]
of $m+1$ affinely independent vectors $a_1, \ldots, a_{m+1}$ in $\mathbb{R}^n$, its \emph{vertices\/}. Affine independence means that the only scalars for which $\sum_{i=1}^{m+1} \lambda_i a_i = (0, \ldots, 0)$ and $\sum_{i=1}^{m+1} \lambda_i = 0$ are $\lambda_1 = \cdots = \lambda_{m+1} = 0$. For instance, having $n$ vertices, the \emph{unit simplex\/}
\[
\textstyle \Delta = \left\{x \in \mathbb{R}^n: x_1, \ldots, x_n \geq 0, \sum_i x_i = 1\right\} = \conv \{e_1, \ldots, e_n\}
\]
is an $(n-1)$-simplex. Affine independence implies that each element of $S$ has a \emph{unique representation} as a convex combination $\sum_i \lambda_i a_i$ of its vertices. The scalars $\lambda_i$ are called \emph{barycentric coordinates\/} or \emph{weights\/}. The vertices of $S$ are said to \emph{span\/} $S$. A \emph{face\/} of a simplex is a simplex spanned by a subset of its vertices. Using the set of all its vertices, $S$ is a face of itself.

A \emph{subdivision\/} of $\Delta$ is a finite collection of smaller $(n-1)$-simplices whose union is $\Delta$ and where the intersection of any two such smaller simplices is empty or a face of both. A \emph{Sperner labeling\/} assigns a label $1, \ldots, n$ to each vertex of the simplices in the subdivision. The label $\ell(x)$ of each vertex $x$ is chosen among its positive coordinates: $\ell(x) \in \{i: x_i > 0\}$. A simplex in the subdivision is \emph{completely labeled\/} if its set of vertices has all $n$ distinct labels.

The left panel of Figure \ref{fig: simplex} shows a subdivision of the unit simplex in $\mathbb{R}^3$; its corners correspond with the standard basis vectors $e_1$, $e_2$, and $e_3$, and by definition have label 1, 2, and 3, respectively.

\section{KKM implies Sperner}

Let us start with the formulations of the KKM lemma and Sperner's lemma:

\begin{namedthm*}{The KKM lemma}
If $C_1, \ldots, C_n$ are closed subsets of $\Delta$ and for each nonempty $J \subseteq \{1, \ldots, n\}$, set $\Delta_J := \conv \{e_j: j \in J\}$ is a subset of\, $\bigcup_{j \in J} C_j$, then\, $\bigcap_{i=1}^n C_i \neq \emptyset$.
\end{namedthm*}

\begin{namedthm*}{Sperner's lemma}
Consider a simplicial subdivision of $\Delta$ and a Sperner labeling. This subdivision contains a completely labeled simplex.
\end{namedthm*}

For a direct proof that the KKM lemma implies Sperner's lemma, we find sets $C_i$ in the KKM lemma such that points in their intersection lie in a completely labeled simplex. The intuition is to exploit the unique representation of elements in a simplex as a convex combination of its vertices. If $x$ lies in a simplex and its barycentric coordinate/weight on a vertex $v$ is strictly positive, then that vertex must lie in every face containing $x$: had it been absent, $x$ would have a second representation that did not use $v$. So we take $C_i$ to be the elements of simplices in the subdivision whose weights on a vertex with label $i$ are sufficiently large --- for convenience, at least $1/n$: since simplices in the subdivision of $\Delta$ have $n$ vertices and weights sum to one, there is always a vertex with weight $1/n$ or more. Now if $x$ belongs to $C_1$ and $C_2$, it must lie in a face with labels 1 and 2. Repeating this, we find a simplex with all labels. The sets $C_1$, $C_2$, and $C_3$ for our example in $\mathbb{R}^3$ are sketched in Fig. \ref{fig: simplex}. To find $C_1$, for instance, go through all simplices of the subdivision and color all elements where a vertex with label 1 has barycentric coordinate 1/3 or more. Their union is $C_1$.

A more naive approach (yes, my first guess), to define $C_i$ as the set of points in simplices with label $i$, does not work. The starred vertex in Fig. \ref{fig: simplex} lies in a simplex of the subdivision with label $i$, no matter what label $i \in \{1, 2, 3\}$ you choose, but not in a completely labeled simplex.

\begin{figure}
\centering
\begin{tikzpicture}[scale=0.8, transform shape]
% Place vertices of the simplicial subdivision
\coordinate (e1) at (0:0);
\coordinate (e2) at (0:4);
\coordinate (e3) at (60:4);
\coordinate (a) at (barycentric cs:e1=2,e2=1);
\coordinate (b) at (barycentric cs:e1=1,e2=2);
\coordinate (c) at (barycentric cs:e2=3,e3=2);
\coordinate (d) at (barycentric cs:e1=1,e3=1);
\coordinate (e) at (barycentric cs:e1=3,e2=3,e3=3);
% Draw unit simplex
\draw[thick] (e1) node[below] {1} -- (e2) node[below] {2} -- (e3) node[above] {3} -- cycle;
% Draw subdivision
\draw (e) node[above right] {2} -- (c) node[right] {2$^{\bigstar}$} -- (b) node[below] {1} -- (e) -- (a) node[below] {2} -- (d) node[left] {1} -- (e) -- (e3);
% Draw fully labeled simplex
\draw[fill=lightgray] (e3) -- (d) -- (e) -- cycle;
\end{tikzpicture}
\hspace{3pt}
\begin{tikzpicture}[scale=0.8, transform shape]
% Place vertices of the simplicial subdivision
\coordinate (e1) at (0:0);
\coordinate (e2) at (0:4);
\coordinate (e3) at (60:4);
\coordinate (a) at (barycentric cs:e1=2,e2=1);
\coordinate (b) at (barycentric cs:e1=1,e2=2);
\coordinate (c) at (barycentric cs:e2=3,e3=2);
\coordinate (d) at (barycentric cs:e1=1,e3=1);
\coordinate (e) at (barycentric cs:e1=3,e2=3,e3=3);
% Draw unit simplex
\draw[thick] (e1) node[below] {1} -- (e2) node[below] {2} -- (e3) node[above] {3} -- cycle;
% Draw subdivision
\draw (e) node[above right] {2} -- (c) node[right] {2} -- (b) node[below] {1} -- (e) -- (a) node[below] {2} -- (d) node[left] {1} -- (e) -- (e3);
% Draw C1
\draw[thick, blue, fill opacity=0.5, fill = blue!20] (e1) -- (barycentric cs:e1=1,a=2) -- (barycentric cs:e1=1,a=1,d=1) -- (barycentric cs:d=1,a=2) -- (barycentric cs:d=1,e=2) -- (barycentric cs:d=1,e3=2) -- cycle;
\draw[thick, blue, fill opacity=0.5, fill = blue!20] (barycentric cs:b=1,a=2) -- (barycentric cs:b=1,e2=2) -- (barycentric cs:b=1,c=2) -- (barycentric cs:b=1,e=2) -- cycle;
\end{tikzpicture}
\hspace{3pt}
\begin{tikzpicture}[scale=0.8, transform shape]
% Place vertices of the simplicial subdivision
\coordinate (e1) at (0:0);
\coordinate (e2) at (0:4);
\coordinate (e3) at (60:4);
\coordinate (a) at (barycentric cs:e1=2,e2=1);
\coordinate (b) at (barycentric cs:e1=1,e2=2);
\coordinate (c) at (barycentric cs:e2=3,e3=2);
\coordinate (d) at (barycentric cs:e1=1,e3=1);
\coordinate (e) at (barycentric cs:e1=3,e2=3,e3=3);
% Draw unit simplex
\draw[thick] (e1) node[below] {1} -- (e2) node[below] {2} -- (e3) node[above] {3} -- cycle;
% Draw subdivision
\draw (e) node[above right] {2} -- (c) node[right] {2} -- (b) node[below] {1} -- (e) -- (a) node[below] {2} -- (d) node[left] {1} -- (e) -- (e3);
% Draw C2
\draw[thick, blue, fill opacity=0.5, fill = blue!20] (barycentric cs:e=1,d=2) -- (barycentric cs:e=1,d=1,a=1) -- (barycentric cs:a=1,d=2) -- (barycentric cs:a=1,e1=2) -- (barycentric cs:a=1,b=2) -- (barycentric cs:a=1,b=1,e=1) -- (barycentric cs:e=1,b=2) -- (barycentric cs:b=1,c=1,e=1) -- (barycentric cs:b=2,c=1) -- (barycentric cs:b=1,e2=1,c=1) -- (barycentric cs:b=2,e2=1) -- (e2) -- (barycentric cs:c=1,e3=2) -- (barycentric cs:e=1,c=1,e3=1) -- (barycentric cs:e=1,e3=2) -- cycle;
\end{tikzpicture}
\hspace{3pt}
\begin{tikzpicture}[scale=0.8, transform shape]
% Place vertices of the simplicial subdivision
\coordinate (e1) at (0:0);
\coordinate (e2) at (0:4);
\coordinate (e3) at (60:4);
\coordinate (a) at (barycentric cs:e1=2,e2=1);
\coordinate (b) at (barycentric cs:e1=1,e2=2);
\coordinate (c) at (barycentric cs:e2=3,e3=2);
\coordinate (d) at (barycentric cs:e1=1,e3=1);
\coordinate (e) at (barycentric cs:e1=1,e2=1,e3=1);
% Draw unit simplex
\draw[thick] (e1) node[below] {1} -- (e2) node[below] {2} -- (e3) node[above] {3} -- cycle;
% Draw subdivision
\draw (e) node[above right] {2} -- (c) node[right] {2} -- (b) node[below] {1} -- (e) -- (a) node[below] {2} -- (d) node[left] {1} -- (e) -- (e3);
% Draw C3
\draw[thick, blue, fill opacity=0.5, fill = blue!20] (e3) -- (barycentric cs:e3=1,d=2) -- (barycentric cs:e3=1,e=2) -- (barycentric cs:c=2,e3=1) -- cycle;
\end{tikzpicture}
\caption{A Sperner labeling with one completely labeled simplex and the sets $C_1$, $C_2$, and $C_3$.}\label{fig: simplex}
\end{figure}
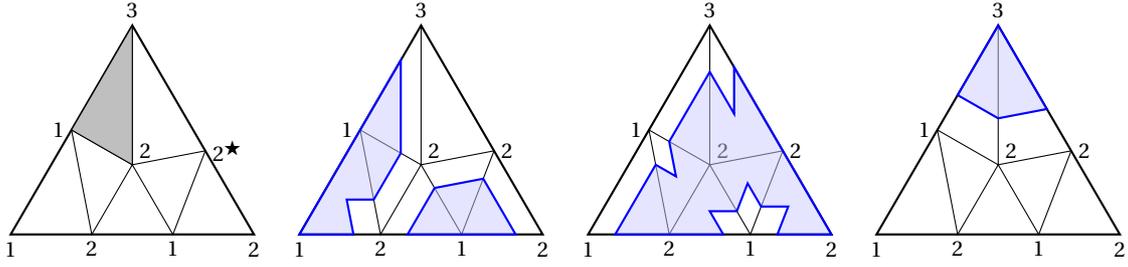

\begin{theorem}
The KKM lemma implies Sperner's lemma.
\end{theorem}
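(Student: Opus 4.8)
The plan is to turn the informal description in the text into concrete closed sets $C_1,\ldots,C_n$, verify the covering hypothesis of the KKM lemma, and then extract a completely labeled simplex from any point in their intersection. For a point $x$ lying in a subdivision simplex $T$ with vertices $v_1,\ldots,v_n$ and (unique) barycentric weights $\mu_1,\ldots,\mu_n$, I would set
\[
 w_i(x) = \sum_{k \,:\, \ell(v_k) = i} \mu_k,
\]
the total weight $x$ places on the vertices of $T$ labeled $i$, and define $C_i = \{x \in \Delta : w_i(x) \geq 1/n\}$. The first thing to check is that $w_i$ is well-defined: if $x$ lies in two subdivision simplices, they meet in a common face, and uniqueness of the barycentric representation forces the weights on the shared vertices to agree while the remaining weights vanish, so $w_i(x)$ does not depend on the simplex chosen. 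Since $w_i$ is affine on each of the finitely many simplices and the values agree on overlaps, $w_i$ is continuous on $\Delta$, whence each $C_i$ is closed.

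Next I would verify the covering hypothesis: for nonempty $J \subseteq \{1,\ldots,n\}$ I claim $\Delta_J \subseteq \bigcup_{j \in J} C_j$. Fix $x \in \Delta_J$, so $x_i = 0$ for all $i \notin J$, and take $T$, $v_k$, $\mu_k$ as above. For any coordinate $i \notin J$, writing $0 = x_i = \sum_k \mu_k (v_k)_i$ as a sum of nonnegative terms shows that $(v_k)_i = 0$ whenever $\mu_k > 0$; hence every vertex $v_k$ with $\mu_k > 0$ has all its positive coordinates inside $J$, so its Sperner label $\ell(v_k)$ lies in $J$. Consequently $\sum_{j \in J} w_j(x) = \sum_{k \,:\, \ell(v_k) \in J} \mu_k = 1$, and since this is a sum of at most $n$ nonnegative numbers, some $w_j(x) \geq 1/n$, i.e. $x \in C_j$ for some $j \in J$.

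With the hypotheses in place, the KKM lemma yields a point $x^* \in \bigcap_{i=1}^n C_i$. The decisive final step is to note that $w_i(x^*) \geq 1/n$ for all $n$ labels $i$, while $\sum_{i=1}^n w_i(x^*) = \sum_k \mu_k = 1$ (each vertex of a containing simplex $T$ contributes to exactly one label). Since $n$ numbers each at least $1/n$ cannot sum to $1$ unless every one equals $1/n$, we obtain $w_i(x^*) = 1/n > 0$ for every $i$. Thus for each label $i$ some vertex of $T$ carries that label, and as $T$ has exactly $n$ vertices, all $n$ labels occur — so $T$ is completely labeled, which is Sperner's conclusion.

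The steps needing the most care are the well-definedness (hence continuity and closedness) of the $w_i$ across shared faces, which is exactly what legitimizes evaluating all $n$ weights of the single point $x^*$ inside one common simplex, together with the two counting arguments. I expect the genuinely novel point — and the only nonroutine one — to be the threshold $1/n$: it is small enough that the covering condition still holds by pigeonhole, yet the constraint $\sum_i w_i(x^*) = 1$ then forces equality throughout and hence the presence of every label. The coordinate-positivity argument that labels of positive-weight vertices remain within $J$ is the other place where the labeling rule $\ell(x) \in \{i : x_i > 0\}$ is used essentially.
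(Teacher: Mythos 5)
Your proof is correct, and it shares the paper's overall strategy --- the threshold $1/n$, the observation that any positive-weight vertex of a subdivision simplex containing a point of $\Delta_J$ must have its label in $J$, and an application of KKM to weight-based sets --- but both the sets you feed into KKM and the final extraction step are genuinely different. The paper defines $C_i$ simplex by simplex, as the union over subdivision simplices of the points placing weight at least $1/n$ on \emph{some single} vertex labeled $i$; your $C_i$ asks only that the \emph{total} weight on label-$i$ vertices reach $1/n$, so your $C_i$ is (weakly) larger, and this relocates the work. The paper gets closedness for free (a finite union of closed sets) but must then labor at the end: from $x^* \in \bigcap_i C_i$ it obtains one simplex $S_i$ and one vertex $v_i$ per label $i$, and it needs the unique-representation/common-face argument to show that every $v_p$ is in fact a vertex of the single simplex $S_1$. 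You instead front-load that same face/unique-representation fact into proving that the aggregated weight functions $w_i$ are well defined across shared faces (hence continuous by pasting on finitely many closed pieces, hence each $C_i$ closed), after which extraction is immediate: all $n$ values $w_i(x^*)$, computed consistently in one simplex $T$ containing $x^*$, are positive, so every label occurs among the $n$ vertices of $T$ and $T$ itself is completely labeled. Your version even yields slightly more than the paper's --- \emph{every} subdivision simplex containing $x^*$ is completely labeled, not just a particular one --- and note that your equality step $w_i(x^*) = 1/n$ is dispensable: $w_i(x^*) \geq 1/n > 0$ already forces each label to appear.
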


\begin{proof}
\noindent \bem{Define $C_i$.} Fix label $i \in \{1, \ldots, n\}$. For each simplex $S = \conv \{a_1, \ldots, a_n\}$ in the subdivision, define the possibly empty subset $T_S \subseteq S$ of convex combinations $\sum_j \lambda_j a_j$ giving weight $\lambda_j \geq 1/n$ to at least one vertex $a_j$ with label $i$. Let $C_i$ be the union of all these $T_S$. As the union of finitely many closed sets, $C_i$ is closed.

\noindent \bem{Verify the KKM condition.} Let $J \subseteq \{1, \ldots, n\}$ be nonempty. To show: $\Delta_J \subseteq \bigcup_{j \in J} C_j$. So let $x \in \Delta_J$. This $x$ is a convex combination $x = \sum_i \lambda_i a_i$ of vertices of a simplex $S = \conv \{a_1, \ldots, a_n\}$ of the subdivision. At least one vertex $a_i$ has weight $\lambda_i \geq 1/n$. Since $x \in \Delta_J$, only coordinates of $x$ and hence of $a_i$ that lie in $J$ can be positive. By definition of the labeling, $\ell(a_i) \in J$. So $x \in C_{\ell(a_i)} \subseteq \bigcup_{j \in J} C_j$.

\noindent \bem{Find a completely labeled simplex.} By KKM, there is an $x^* \in \bigcap_i C_i$. By definition of $C_i$: for each label $i$, some simplex $S_i$ in the subdivision has $x^*$ as a convex combination of its vertices with a weight of at least $1/n$ on a vertex $v_i$ with label $i$. $S_1$ is completely labeled. Its vertex $v_1$ has label 1.  For labels $p \neq 1$, note that $x^* \in S_1 \cap S_p$. This intersection is a face of $S_1$ and $S_p$. But then $v_p$ with label $p$ is one of the vertices (of $S_1$ and $S_p$) spanning this face: if not, $x^*$ would also be a convex combination of vertices of $S_p$ \emph{other than\/} $v_p$, contradicting its unique representation in $S_p$.
\end{proof}

\section*{Acknowledgements}

I thank J\"{o}rgen Weibull, Henrik Petri, and Albin Erlanson for helpful discussions. Financial support from the Wallander-Hedelius foundation through grant P2010-0094:1 is gratefully acknowledged.

\bibliographystyle{abbrvnat}
\bibliography{KKMimpliesSpernerReferences}
\end{document}